\theoremstyle{plain}
\newtheorem{theorem}{Theorem}[section]
\newtheorem{lemma}[theorem]{Lemma}
\newtheorem{corollary}[theorem]{Corollary}
\newtheorem{proposition}[theorem]{Proposition}
\theoremstyle{definition}
\newtheorem{remark}{Remark}
\newtheorem{question}{Question}
\def\1{\mathbf 1}
\def\Sym{\mathrm{Sym}}
\def\ZZ{\mathbb Z}
\def\cR{\mathcal{R}}
\def\id{\mathrm{id}}
\DeclareMathOperator{\Map}{Map}
\DeclareMathOperator{\PMap}{PMap}
\newcounter{jcomments}
\newcounter{gcomments}
\newcounter{ccomments}
\title{Isomorphisms and commensurability of surface Houghton groups}
\author{Javier Aramayona}
\author{George Domat}
\author{Christopher J. Leininger}
\thanks{J. A.~was supported by grant PID2021-126254NB-I00 and by the Severo Ochoa
award CEX2019-000904-S, funded by MCIN/AEI/10.13039/501100011033. G. D.~was supported by NSF DMS-2303262. C. J. L.~was supported by NSF DMS-2305286.}
\begin{document}

\begin{abstract}
We classify surface Houghton groups, as well as their pure subgroups, up to isomorphism, commensurability, and quasi-isometry. 
\end{abstract}

\maketitle

\section{Introduction}

Surface Houghton groups, introduced in \cite{ABKL2023}, are certain countable subgroups of mapping class groups of infinite-type surfaces with finitely many ends, all of them non-planar. 

A first piece of motivation for studying surface Houghton groups comes from the fact that they contain, up to conjugation, all {\em end periodic homeomorphisms} \cite[Section 2.3]{FKLL2023}.  Such homeomorphisms admit a Nielsen-Thurston type classification \cite{CC-book} and their properties are tightly connected to the theory of foliations, pseudo-Anosov flows, and hyperbolic geometry of 3-manifolds; see e.g.~\cite{FenleyCT,FKLL2023,FKLL2023b,LMT2023}. In addition, surface Houghton groups are similar in spirit to the classical Houghton groups \cite{Houghton} and their {\em braided} counterparts \cite{Funar}; in fact, in \cite{ABKL2023} it was proved that surface Houghton groups enjoy the same finiteness properties as (braided) Houghton groups \cite{GLU2022}. Finally, surface Houghton groups are strongly related to the asymptotically rigid mapping class groups of Cantor manifolds \cite{ABFPW2021,FKS2012,GLU2022}, and associated Higman-Thompson groups.

The  definition of surface Houghton groups is somewhat involved, so we postpone details to \Cref{sec:prelim} and give an abridged overview here. Let $\Sigma_r$ be the connected, orientable surface with empty boundary and exactly $r$ ends, all of which are non-planar. We view $\Sigma_r$ as constructed from a compact surface of genus $g$ with $r$ boundary components by inductively gluing copies of a surface of genus $h$ with two boundary components, and then taking the union of the surfaces obtained at each step. 
The {\em surface Houghton group} $B(g,h,r)$ defined by the above data is the subgroup of the mapping class group $\Map(\Sigma_r)$ whose elements are {\em eventually rigid}, in the sense of \Cref{sec:prelim}. We remark that the family of surface Houghton groups constructed here is more general than that of \cite{ABKL2023}, which corresponds to the case $g=0$ and $h=1$.

In this note, we describe precisely the commensurability and isomorphism classification of surface Houghton groups and their pure subgroups.
More concretely, we will show the following.

\begin{theorem}\label{thm:main} For $g,g' \geq 0$, $h,h'\geq 1$, and $r,r' \geq 2$, the groups $B(g,h,r)$ and $B(g',h',r')$:
\begin{enumerate}
    \item are commensurable if and only if $r= r'$.
    \item have isomorphic pure subgroups if and only if $r=r'$ and $h=h'$.
    \item are isomorphic if and only if $r = r', h=h'$, and there exists $n \in \mathbb Z$ such that $g' \equiv g + nh$ mod $r$.
\end{enumerate}
\end{theorem}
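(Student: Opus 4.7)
The strategy combines a structural description of surface Houghton groups with finiteness-property invariants and mapping-class-group rigidity. The first step is to establish a short exact sequence
\[
1 \to \PMap_c(\Sigma_r) \to B(g,h,r) \to H_r \to 1,
\]
where $H_r$ denotes the classical Houghton group; the quotient records the eventual permutation-plus-translation behavior on the $r$ ends. Restricting to pure subgroups gives
\[
1 \to \PMap_c(\Sigma_r) \to PB(g,h,r) \to PH_r \to 1,
\]
with $PH_r$ the pure Houghton group. The key observation is that both the kernel and the quotient depend only on $r$, and not on $g$ or $h$.

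To prove the "only if" direction of (1) --- and hence the $r=r'$ portion of (2) and (3) --- I would generalize the finiteness-property arguments of \cite{ABKL2023} (which build on techniques of \cite{GLU2022}) to show that $B(g,h,r)$ is of type $F_{r-1}$ but not of type $F_r$. Since this integer is a quasi-isometry invariant, it is in particular a commensurability and isomorphism invariant. For the "if" direction of (1), the pure subgroup $PB(g,h,r)$ has finite index in $B(g,h,r)$, and inside it the subgroup whose translation vectors lie in $N\cdot\ZZ^{r-1}$ is again of finite index; taking $N$ to be a common multiple of $h$ and $h'$ and re-tiling $\Sigma_r$ into super-tiles of genus $hN$ should identify corresponding finite-index subgroups in $B(g,h,r)$ and $B(g',h',r)$, producing commensurability. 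The "if" directions of (2) and (3) would be handled by exhibiting explicit isomorphisms: in (2), varying $g$ with $h$ and $r$ fixed amounts to absorbing different amounts of genus into a compact base subsurface without changing $PB$ up to isomorphism; in (3), the congruence $g'\equiv g + nh \pmod r$ records precisely the freedom to cyclically permute the $r$ ends while sliding each of them by $n$ tiles into the base.

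The main obstacle will be the "only if" direction of (2) and (3). Given an isomorphism $\varphi\co PB(g,h,r)\to PB(g',h',r)$, my plan is to identify $\PMap_c(\Sigma_r)$ as a characteristic subgroup of $PB(g,h,r)$ --- for instance, as the kernel of the natural map to $\ZZ^{r-1}$ obtained by quotienting $PH_r$ by its finitary permutations. Then $\varphi$ restricts to an automorphism of $\PMap_c(\Sigma_r)$, which, via the infinite-type analogue of Ivanov's rigidity theorem, should be induced by a homeomorphism $\Phi$ of $\Sigma_r$. Analyzing how $\Phi$ conjugates explicit translation elements of $PB(g,h,r)$ --- visible algebraically as generators of the $\ZZ^{r-1}$ quotient --- to those of $PB(g',h',r)$ should force the tile sizes to agree, giving $h = h'$, and in the non-pure case (3) pin down the action on end-labels to the congruence $g'\equiv g + nh \pmod r$. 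Bootstrapping from the rigidity of $\PMap_c(\Sigma_r)$ to rigidity of this finer tile structure will be the most delicate technical step.
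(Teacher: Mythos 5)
You have the right skeleton for parts (1), (2), and all the ``if'' directions, but there is a genuine gap at the hardest point: the ``only if'' direction of (3). Every invariant in your plan --- translation elements, tile sizes, the action on end-labels --- is blind to the core genus $g$. Indeed, for fixed $h$ and $r$ the groups $B(g,h,r)$ and $B(g',h,r)$ have the identical quotient $\ZZ^{r-1}\rtimes\Sym(r)$ by $\Map_c(\Sigma_r)$, identical image $(h\ZZ)^{r-1}$ under the genus-shifting homomorphism $\Phi$ of \cite{APV2020}, and identical eventual behavior at every end; conjugating ``explicit translation elements'' can therefore never obstruct an isomorphism when $g'\not\equiv g+nh \pmod r$. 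The paper's argument for the necessity of the congruence is of a completely different nature: it uses torsion and fixed-point counts. Via an orbifold cover and Riemann--Hurwitz (\Cref{lem:finite order}), an order-$r$ homeomorphism of $S_{g_1,r}$ cyclically permuting the boundary, with exactly $k$ fixed points ($k$ even) and no shorter periodic orbits, exists if and only if $g_1 = 1+\tfrac{k}{2}(r-1)+nr$ with $n\ge -1$. Since the genera of suited subsurfaces for $\cR_{g,h,r}$ are exactly $g+h\ZZ_{\ge 0}$, one builds such an element $\hat\rho\in B(g,h,r)$ rigid outside a suited subsurface, pushes it through the conjugating homeomorphism furnished by \Cref{prop:commen}, and compares an invariant suited subsurface for $\cR_{g',h,r}$ containing all fixed points: this forces $g+mh\equiv g'+m'h \pmod r$. (The paper's introductory example with $B(0,2,2)$ versus $B(1,2,2)$ and an involution with two fixed points illustrates exactly this mechanism.) Without some such fixed-point/Riemann--Hurwitz input, your outline cannot complete part (3).

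Two secondary issues. First, your short exact sequence is incorrect: finitary permutations of pieces are compactly supported, so they lie in the kernel, and the quotients are $B(g,h,r)/\Map_c(\Sigma_r)\cong \ZZ^{r-1}\rtimes\Sym(r)$ and $PB(g,h,r)/\Map_c(\Sigma_r)\cong (h\ZZ)^{r-1}$ (\Cref{prop:compactly supported}), not Houghton groups --- $H_r$ is not virtually abelian. Your intended conclusion survives, since $\Map_c(\Sigma_r)$ is still characteristic (it is the intersection of all finite-index subgroups, which is how the paper characterizes it), but note that neither $B(g,h,r)$ nor $\Map_c(\Sigma_r)$ has finite index in $\Map(\Sigma_r)$, so there is no off-the-shelf ``infinite-type Ivanov'' theorem to invoke: the paper must re-run the Bavard--Dowdall--Rafi argument inside $B(g,h,r)$ itself, replacing their algebraic characterization of finite-support elements and of generating twists; your ``bootstrapping'' step is precisely this nontrivial adaptation, not a citation. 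Second, for part (1) your plan inverts the paper's logic: type $F_{r-1}$-but-not-$FP_r$ for general $(g,h)$ is deduced in the paper \emph{from} commensurability (the engulfing lemma, \Cref{lem:engulf}, shows $B(g,h,r)$ has finite index in $B(0,1,r)$, where \cite{ABKL2023} applies), while the ``only if'' is handled elementarily: by \Cref{prop:compactly supported} every finite quotient factors through $\ZZ^{r-1}\rtimes\Sym(r)$, so the supremum of ranks of abelianizations of finite-index subgroups equals $r-1$, a commensurability invariant. Your route --- generalizing the ABKL finiteness computation to all $(g,h)$ --- would likely work but is far heavier than needed, and your ``super-tile'' commensuration for the ``if'' direction needs the common-refinement formulation (both structures engulfed by one $\cR_{0,1,r}$, with $B(g,h,r)$ the kernel of an action on labels mod $h$) to actually produce equal, rather than merely analogous, finite-index subgroups.
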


We note that, for all $g$ and $h$, the group $B(g,h,1)$ is isomorphic to the group of compactly supported mapping classes of $\Sigma_{1}$, so \Cref{thm:main} is only interesting for $r,r' \geq 2$. 

\begin{remark}
    Point (3) of the above theorem may seem slightly unintuitive as we are only changing a ``compact" part of the surface. We offer the following example to keep in mind in order to elucidate the proofs below. See \Cref{fig:example} for an accompanying picture. Consider the groups $B(0,2,2)$ and $B(1,2,2)$. In $B(0,2,2)$ there is an involution, $\rho$, that swaps the two ends of the surface and has exactly two fixed points. This map is ``eventually rigid" exactly outside of an annulus denoted $W$, which is also exactly the core, $C$, for $B(0,2,2)$. We say that $W$ is a \emph{suited surface} for $\rho$. Note that $W$ has even (zero) genus. Now, suppose to the contrary, that we had an isomorphism $B(0,2,2) \cong B(1,2,2)$. The first step in our proof is to modify a proof from \cite{BDR2020} in order to see that this isomorphism must be realized by conjugation in $\Map(\Sigma_{2})$. Then the image of $\rho$ under this isomorphism, $\rho' \in B(1,2,2)$, must also have exactly two fixed points and be ``eventually rigid" for the structure determined by the triple $(1,2,2)$. However, note that this implies that $\rho'$ has a suited surface, $W'$, that must have odd genus. Indeed, $W'$ is made up of a core $C'$ of genus one together with a finite number of pieces, each of which have genus two. However, then $\rho'$ must have at least four fixed points, a contradiction. 

     \begin{figure}[ht!]
 \centering
	    \def\svgwidth{\textwidth}
		    \import{pics/}{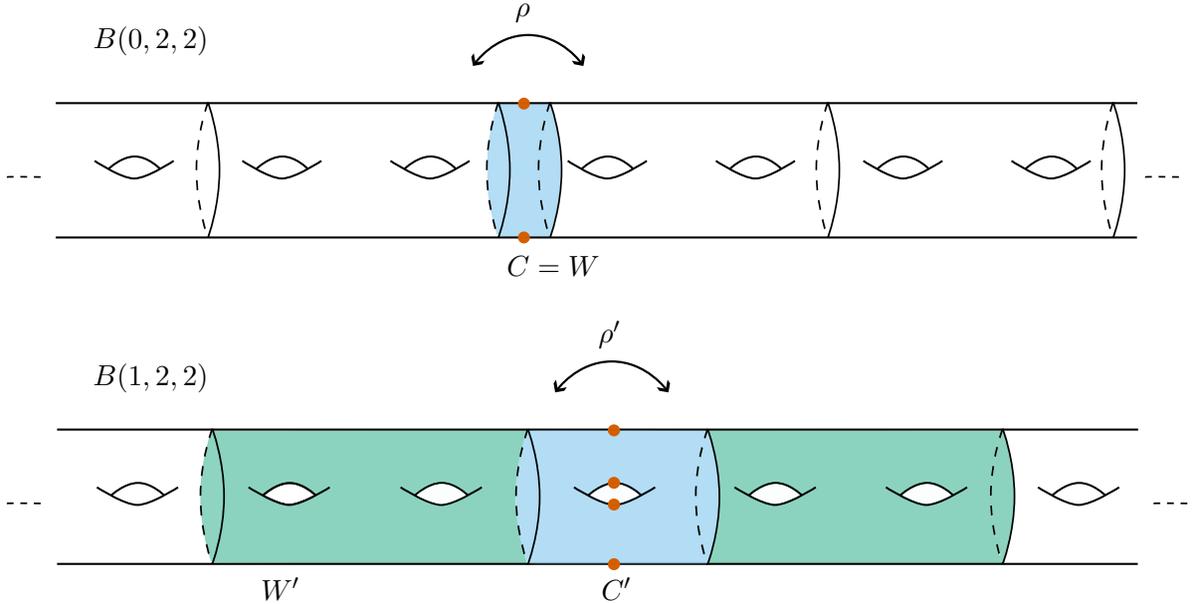}
		    \caption{An example demonstrating that $B(0,2,2)$ cannot be isomorphic to $B(1,2,2)$. On the top surface, $C$ is the core (denoted in blue) for $B(0,2,2)$. This coincides with a suited subsurface, $W$, for the involution $\rho \in B(0,2,2)$. On the bottom surface, $\rho'$ is a potential image of $\rho$ under an alleged isomorphism, $C'$ (in blue) is the core, and $W'$ (in green) is a suited subsurface for $\rho'$. Fixed points of the two maps are denoted in orange. Note that $\rho$ has exactly two fixed points and $\rho'$ must have at least four.}
	    \label{fig:example}
\end{figure}

\end{remark}



A consequence of part (1) of \Cref{thm:main} and the main result of \cite{ABKL2023} is the following: 

\begin{corollary}\label{cor:finprop}
    For all $g\ge 0$, $h\ge 1$, and $r\ge 2$, the group $B(g,h,r)$ is of type $F_{r-1}$ but not of type $FP_r$. 
\end{corollary}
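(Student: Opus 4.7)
The plan is to deduce the corollary directly from \Cref{thm:main}(1) together with the main theorem of \cite{ABKL2023}, using the fact that both type $F_n$ and type $FP_n$ are preserved under commensurability.

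The first step is to recall that the main theorem of \cite{ABKL2023} establishes the corollary in the special case $g=0$, $h=1$: namely, $B(0,1,r)$ is of type $F_{r-1}$ but not of type $FP_r$ for every $r \geq 2$, matching the finiteness behavior of the classical (braided) Houghton groups. The second step is to apply \Cref{thm:main}(1), which gives that $B(g,h,r)$ and $B(0,1,r)$ are commensurable for every $g \geq 0$ and $h \geq 1$. The final step is to invoke the classical fact (see, e.g.,~Brown's \emph{Cohomology of Groups}) that if two groups are commensurable, then one is of type $F_n$ (respectively, $FP_n$) if and only if the other is, concluding the proof.

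The main obstacle: there is essentially none, since the genuine content is already packaged in \Cref{thm:main}(1) and in the finiteness theorem of \cite{ABKL2023}. The only routine verification is that the standard commensurability invariance of $F_n$ and $FP_n$ applies in our setting; indeed, these properties are preserved under passage to both finite-index subgroups and finite-index overgroups for arbitrary groups, with no torsion or further hypotheses needed, so the deduction is immediate.
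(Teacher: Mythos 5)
Your proposal is correct and follows exactly the paper's (implicit) argument: the paper derives \Cref{cor:finprop} as an immediate consequence of \Cref{thm:main}(1) and the main theorem of \cite{ABKL2023}, which establishes the $F_{r-1}$/not-$FP_r$ dichotomy for $B(0,1,r)$, using the standard commensurability invariance of $F_n$ and $FP_n$. Your added remark that these invariance properties hold for arbitrary groups with no torsion hypotheses is accurate and fills in the only detail the paper leaves unstated.
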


Next, a result of Alonso \cite{Alonso} asserts that properties $F_n$ and $FP_n$ are invariant under quasi-isometries. Therefore, we deduce the following corollary. 

\begin{corollary}
    For $g,g' \geq 0$, $h,h'\geq 1$, and $r,r' \geq 2$, the groups $B(g,h,r)$ and $B(g',h',r')$
    are quasi-isometric if and only if $r= r'$.
    \label{cor:qi}
\end{corollary}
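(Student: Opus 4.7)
The plan is to extract \Cref{cor:qi} as a direct consequence of \Cref{thm:main}(1), \Cref{cor:finprop}, and the quoted theorem of Alonso. Crucially, since $r,r'\geq 2$, \Cref{cor:finprop} tells us that $B(g,h,r)$ and $B(g',h',r')$ are of type $F_{r-1}$ and $F_{r'-1}$ respectively, hence of type $F_1$, i.e.\ finitely generated. This finite generation is what allows us to pass between commensurability and quasi-isometry at all, so I would state it up front.

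For the forward implication, suppose $r=r'$. \Cref{thm:main}(1) yields that $B(g,h,r)$ and $B(g',h',r')$ are commensurable, i.e.\ have isomorphic finite-index subgroups. The standard fact that commensurable finitely generated groups are quasi-isometric (with respect to any word metrics) then immediately gives the quasi-isometry.

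For the converse, I would argue by contradiction: assume $B(g,h,r)$ and $B(g',h',r')$ are quasi-isometric, and, without loss of generality, that $r' \geq r$. By Alonso's theorem, the properties $F_n$ and $FP_n$ are quasi-isometry invariants, so if one of the two groups is of type $F_n$ (resp.\ $FP_n$), so is the other. By \Cref{cor:finprop}, $B(g',h',r')$ is of type $F_{r'-1}$, and in particular of type $F_{r-1}$, so we gain no information in that direction; the useful input is instead that $B(g,h,r)$ fails $FP_r$, forcing $B(g',h',r')$ to fail $FP_r$ as well. If $r' > r$, then $r'-1 \geq r$, so $B(g',h',r')$ being of type $F_{r'-1}$ implies it is of type $F_r$, hence of type $FP_r$, a contradiction. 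Therefore $r' = r$.

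There is essentially no obstacle here beyond bookkeeping: the nontrivial mathematical content (the finiteness properties and the commensurability classification) has been done in \Cref{thm:main}(1) and \Cref{cor:finprop}, and Alonso's theorem is cited as a black box. The only point requiring slight care is confirming that ``commensurable $\Rightarrow$ quasi-isometric'' applies in our setting, which is immediate from finite generation as noted above.
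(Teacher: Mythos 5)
Your proof is correct and is essentially the paper's argument: the paper likewise deduces the corollary by combining \Cref{thm:main}(1) with \Cref{cor:finprop} and Alonso's quasi-isometry invariance of $F_n$ and $FP_n$. Your added bookkeeping (noting finite generation via $F_{r-1}$ with $r\geq 2$ to justify ``commensurable $\Rightarrow$ quasi-isometric,'' and the explicit contradiction when $r'>r$) just fills in steps the paper leaves implicit.
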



It was mentioned above that surface Houghton groups are analogs of the asymptotically rigid mapping class groups of Cantor surfaces \cite{ABFPW2021,FKS2012,GLU2022}, which can in turn be regarded as surface versions of Higman-Thompson groups \cite[Section 5]{ABFPW2021}. Higman-Thompson groups are known to satisfy an algebraic rigidity theorem akin to \Cref{thm:main} \cite{Pardo2011}; thus, in light of \Cref{thm:main} and the analogous result for Higman-Thompson groups, a natural question is as follows. 

\begin{question}
   Classify asymptotically rigid mapping class groups of Cantor surfaces up to commensurability/quasi-isometry/isomorphism. 
\end{question}

In regards to the question above, we remark that asymptotically rigid mapping class groups of infinite-genus Cantor surfaces have no finite-index subgroups; moreover, their pure subgroups always coincide with the compactly supported mapping class group \cite[Proposition 4.3]{ABFPW2021}. 

Surface Houghton groups can also be thought of as a two-dimensional analog of classical Houghton groups. The Houghton groups, $H_{r}$, can be thought of as the ``asymptotically rigid" homeomorphisms of the space consisting of $r>0$ disjoint convergent sequences of isolated points. The finiteness properties of these groups were computed in \cite{Brown1987} and one can deduce that they are never isomorphic for different values of $r$. Similar to the surface Houghton groups, one could define Houghton groups $H(g,h,r)$ for $g\geq 0$ and $h,r>0$ and ask if results similar to \Cref{thm:main} hold. The general approach taken in this note should work for these Houghton groups. The key piece that is missing and requires proof is a version of \Cref{prop:commen}. That is, one would need to know verify that an isomorphism of Houghton groups is always induced by conjugation in the larger homeomorphism group.

\medskip

\noindent{\bf Acknowledgements.} J.A. is grateful to Rice University, and particularly to C.J.L, for their hospitality.  The authors are grateful to Anthony Genevois for pointing out Corollary \ref{cor:qi} and thank the referee for several helpful comments.

\section{Preliminaries} \label{sec:prelim}

Throughout, we let $\Sigma_r$ be the connected, orientable surface with empty boundary and exactly $r$ ends, all of which are accumulated by genus. For each $h > 0$, let $Y^h$ be a surface of genus $h$ with two boundary components, denoted $\partial_+Y^h$ and $\partial_- Y^h$.  We fix, once and for all, a homeomorphism $\lambda \colon \partial_- Y^h \to \partial_+ Y^h$ which we call the {\em swapping homeomorphism}. 

A {\em $(g,h)$--rigid structure} on $\Sigma_r$ is a decomposition of $\Sigma_r$ into subsurfaces,
\[ \Sigma_r = C \cup \bigcup_{j \in J} Y_j \]
where $C$ is a connected subsurface of genus $g$ with $r$ boundary components called the {\em core}, $J$ is some countable set, and each $Y_j$ is a subsurface of genus $h$ with two boundary components, called a {\em piece}, which comes equipped with a fixed {\em marking homeomorphism} $\phi_j \colon Y_j \to Y^h$ (which we also use to label the boundary components $\partial_\pm Y_j = \phi_j^{-1}(\partial_\pm Y^h)$). We further require this data to satisfy the following conditions. 
\begin{itemize}
  \item All subsurfaces in the decomposition have disjoint interiors;
  \item For all $j \in J$, $Y_j \cap C = \emptyset$ or $Y_j \cap C = \partial_- Y_j$ is a component of $\partial C$;
  \item For all $i \neq j \in J$, $Y_i \cap Y_j = \emptyset$, or $Y_i \cap Y_j = \partial_- Y_i = \partial_+ Y_j$ and
  \[\phi_j|_{\partial_+Y_j}^{-1} \circ \lambda \circ \phi_i|_{\partial_-Y_i} = \id_{\partial_- Y_i}, \]
  after possibly swapping the roles of $i$ and $j$.
\end{itemize}
A {\em suited subsurface} $Z \subset \Sigma_r$ is a connected union of the core and a finite set of pieces.

Fixing a $(g,h)$--rigid structure $\cR_{g,h,r}$ on $\Sigma_r$, we define the {\em surface Houghton group} of $\cR_{g,h,r}$ to be the subgroup of $\Map(\Sigma_r)$ consisting of (isotopy classes of) orientation preserving homeomorphisms $f \colon \Sigma_r \to \Sigma_r$ such that for some suited subsurface $Z$, $f(Z)$ is another suited subsurface and for any piece $Y_j \subset \overline{\Sigma - Z}$ there is a piece $Y_i$ so that $f(Y_j)=Y_i$ and $f|_{Y_j} = \phi_i^{-1} \phi_j$.  Any two $(g,h)$--rigid structures on $\Sigma_r$ clearly define conjugate surface Houghton groups, and we let $B(g,h,r)$ denote any such subgroup.

The pure subgroup $\PMap(\Sigma_r)$ is the index $r!$ subgroup of $\Map(\Sigma_r)$ consisting of elements that fix each end of $\Sigma_r$.  Let $\Phi:\PMap(\Sigma_{r}) \rightarrow \ZZ^{r-1}$ be the surjective homomorphism defined in \cite{APV2020} which effectively ``counts" the shifting of genus.  We write $\Map_c(\Sigma_r)$ to denote the subgroup of $\PMap(\Sigma_r)$ consisting of compactly supported mapping classes.

We define the pure subgroup $PB(g,h,r) = \PMap(\Sigma_r) \cap B(g,h,r)$.  The restriction of $\Phi$ to $PB(g,h,r)$ maps to $(h \mathbb Z)^{r-1}$ and the kernel is precisely $\Map_c(\Sigma_r)$. Note that $\Map_c(\Sigma_{r})$ is contained in $PB(g,h,r)$ for all choices of $g$ and $h$.  The following is proved in \cite{ABKL2023} for $B(0,1,r)$, but the same argument applies.  Here, $\Sym(r)$ is the symmetric group on $r$ elements.
\begin{proposition} \label{prop:compactly supported}
For every $(g,h,r)$, there is a surjective homomorphism $\varphi \colon B(g,h,r) \to \mathbb Z^{r-1} \rtimes \Sym(r)$. Furthermore, every finite quotient factors through $\varphi$, and $\ker(\varphi) = \Map_c(\Sigma_r)$.  \qed
\end{proposition}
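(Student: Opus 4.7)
I would build $\varphi$ from two natural ingredients: the permutation action of $B(g,h,r)$ on the $r$ ends of $\Sigma_r$, and the genus-shift homomorphism $\Phi$ from \cite{APV2020}. The action on ends gives a homomorphism $\pi \colon B(g,h,r) \to \Sym(r)$, which is surjective because any $\sigma \in \Sym(r)$ is realized by an eventually rigid homeomorphism obtained by taking a homeomorphism of the core $C$ that permutes its $r$ boundary components according to $\sigma$ and then extending rigidly to the pieces via the marking identifications (sending the $n$-th piece of the $i$-th end to the $n$-th piece of the $\sigma(i)$-th end). Replacing $\cR_{g,h,r}$ by a conjugate rigid structure if necessary, I would arrange this construction to be compatible in $\sigma$, producing a splitting $s \colon \Sym(r) \to B(g,h,r)$ of $\pi$; equivalently, a $\Sym(r)$-action on $\Sigma_r$ by homeomorphisms that preserve the rigid structure and permute the ends in the standard way.

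The map $\Phi \colon \PMap(\Sigma_r) \to \mathbb Z^{r-1}$ restricts on $PB(g,h,r) = \ker(\pi)$ to a surjection onto $(h\mathbb Z)^{r-1}$ (each piece contributes genus $h$), with kernel $\Map_c(\Sigma_r)$. Identifying $(h\mathbb Z)^{r-1} \cong \mathbb Z^{r-1}$ and combining with the splitting, I would define
\[ \varphi(f) = \bigl(\Phi\bigl(f \cdot s(\pi(f))^{-1}\bigr),\; \pi(f)\bigr) \in \mathbb Z^{r-1} \rtimes \Sym(r). \]
Checking that $\varphi$ is a homomorphism reduces to the equivariance $\Phi(s(\sigma) n s(\sigma)^{-1}) = \sigma \cdot \Phi(n)$ for $n \in PB(g,h,r)$, which follows from the naturality of $\Phi$ with respect to end-labeling and the fact that $s(\sigma)$ permutes ends by $\sigma$. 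Surjectivity of $\varphi$ is immediate from surjectivity of $\pi$ and $\Phi|_{PB(g,h,r)}$, and $\ker(\varphi) = \Map_c(\Sigma_r)$ follows because $\ker(\varphi) \subseteq \ker(\pi) = PB(g,h,r)$, where $\varphi$ restricts to $(\Phi, e)$.

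The main obstacle is the remaining claim that every finite quotient of $B(g,h,r)$ factors through $\varphi$, equivalently that $\Map_c(\Sigma_r)$ admits no nontrivial homomorphism to a finite group. I would follow the strategy of \cite{ABKL2023}: since $\Sigma_r$ has infinite genus, $\Map_c(\Sigma_r)$ is a directed union of mapping class groups of compact subsurfaces of genus at least three, each of which is perfect by classical results, and the abundance of ``fresh'' genus outside any compact set allows one to express every Dehn twist generator as a commutator in a manner compatible with the direct limit. The delicate technical step is upgrading perfectness to rule out all finite quotients (not merely abelian ones); this is where the infinite-genus structure of $\Sigma_r$ and the conjugacy flexibility of Dehn twists in $\Map_c(\Sigma_r)$ are essential, and it is precisely the step that must be transplanted from the $B(0,1,r)$ case without modification.
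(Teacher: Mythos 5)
Your construction of $\varphi$ hinges on a splitting $s \colon \Sym(r) \to B(g,h,r)$ of the end-permutation map $\pi$, realized as a $\Sym(r)$-action on $\Sigma_r$ by homeomorphisms preserving the rigid structure. This is a genuine gap: no such action exists once $r \geq 4$, and passing to a conjugate rigid structure cannot help. Indeed, suppose $G \cong \Sym(r)$ were a group of eventually rigid homeomorphisms acting standardly on the ends. Pick a suited subsurface $W$ outside of which every element of the finite set $G$ is rigid; then each $g(W)$ is suited, so $W' = \bigcup_{g \in G} g(W)$ is a $G$-invariant suited subsurface, and $G$ permutes its $r$ boundary circles exactly as it permutes the ends. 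The stabilizer $H \cong \Sym(r-1)$ of one boundary circle $c$ acts faithfully on $W'$ (an element of $H$ acting trivially on $W'$ fixes all $r$ circles, hence all ends, hence is trivial since $\pi$ is injective on $G$). Conjugating the finite action on the compact surface $W'$ to an isometric one, orientation-preservation forces $H$ to act on $c$ orientation-preservingly and with trivial kernel (an orientation-preserving isometry fixing a boundary circle pointwise is the identity on that component), so $H$ embeds into a finite subgroup of $\Homeo^+(S^1)$, which is cyclic. Since $\Sym(r-1)$ is not cyclic for $r \geq 4$, the splitting does not exist; this is consistent with the proposition, because $B(g,h,r)/\Map_c(\Sigma_r) \cong \mathbb Z^{r-1} \rtimes \Sym(r)$ contains $\Sym(r)$ without that subgroup lifting to $B(g,h,r)$. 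Your verification that $\varphi$ is a homomorphism uses $s$-equivariance of $\Phi$, so it inherits the same defect; a merely set-theoretic section would not make your formula multiplicative.

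The repair is to define $\varphi$ directly from the eventual rigidity data, with no section at all: indexing the pieces toward each end by $n \in \mathbb N$, an element $f \in B(g,h,r)$ eventually carries the $n$-th piece at end $i$ rigidly to the $(n+d_i)$-th piece at end $\sigma(i)$, and one sets $\varphi(f) = ((d_i)_i, \sigma)$. The composition rule $\varphi(f'f) = ((d_i + d'_{\sigma(i)})_i, \sigma'\sigma)$ is precisely the semidirect-product multiplication, conservation of genus gives $\sum_i d_i = 0$ so the image is the sum-zero copy of $\mathbb Z^{r-1}$, surjectivity follows from your realization of individual permutations together with handle-shift-type rigid elements, and $\ker \varphi = \Map_c(\Sigma_r)$ because a map with $d = 0$ and $\sigma = \mathrm{id}$ is literally the identity off a suited subsurface. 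On the finite-quotient claim, note two smaller issues: your ``equivalently'' should be ``it suffices that'' (the claim concerns quotients of $B(g,h,r)$ and follows from, but is weaker than, $\Map_c(\Sigma_r)$ having no nontrivial finite quotients), and perfectness alone only excludes abelian quotients, not finite simple ones; the missing ingredient is a pigeonhole on images of twists about many disjoint, mutually conjugate nonseparating curves combined with normal generation by a twist. That said, on this half you are aligned with the paper, which gives no argument either and simply cites \cite{ABKL2023} with the remark that the same argument applies; the substantive error in your proposal is the splitting in the construction of $\varphi$ itself.
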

Composing the homomorphism $\varphi$ with the further quotient to $\Sym(r)$ describes the action of $B(g,h,r)$ on the ends, and so has kernel precisely $PB(g,h,r)$.  In addition, the restriction of $\varphi$ to $PB(g,h,r)$ maps to $\mathbb Z^{r-1}$, and this is precisely the homomorphism $\tfrac1h\Phi$.

\section{Engulfing}

Suppose we have a $(0,1)$--rigid structure $\cR_{0,1,r}$ on $\Sigma_r$.  For any $g \geq 0$ and $h \geq 1$, we can define a $(g,h)$--rigid structure from $\cR_{0,1,r}$ as follows.

First, let $C'$ be the connected union of $C$ (the core of $\cR_{0,1,r}$) together with $g$ pieces of $\cR_{0,1,r}$.  The pieces of $\cR_{g,h,r}$ are then determined by this choice of $C'$ so that they are connected unions of the pieces of $\cR_{0,1,r}$.  See \Cref{fig:engulfex}.  The homeomorphisms from a piece of $\cR(g,h,r)$ to $Y^h$ are constructed from the chosen homeomorphisms of pieces of $\cR(0,1,r)$ to $Y^1$, by viewing $Y^h$ as decomposed into the union of $h$ subsurfaces homeomorphic to $Y^1$. We note that some care must be taken in defining the preferred homeomorphism to $Y^h$ to ensure compatibility with the swapping homeomorphism, as required in the last condition of the definition of a rigid structure.  If $\cR_{g,h,r}$ is constructed in this way, we say that $\cR_{g,h,r}$ is {\em engulfed} by $\cR_{0,1,r}$. 

 \begin{figure}[ht!]
 \centering
	    \def\svgwidth{\textwidth}
\begingroup%
  \makeatletter%
  \providecommand\color[2][]{%
    \errmessage{(Inkscape) Color is used for the text in Inkscape, but the package 'color.sty' is not loaded}%
    \renewcommand\color[2][]{}%
  }%
  \providecommand\transparent[1]{%
    \errmessage{(Inkscape) Transparency is used (non-zero) for the text in Inkscape, but the package 'transparent.sty' is not loaded}%
    \renewcommand\transparent[1]{}%
  }%
  \providecommand\rotatebox[2]{#2}%
  \newcommand*\fsize{\dimexpr\f@size pt\relax}%
  \newcommand*\lineheight[1]{\fontsize{\fsize}{#1\fsize}\selectfont}%
  \ifx\svgwidth\undefined%
    \setlength{\unitlength}{455.34339862bp}%
    \ifx\svgscale\undefined%
      \relax%
    \else%
      \setlength{\unitlength}{\unitlength * \real{\svgscale}}%
    \fi%
  \else%
    \setlength{\unitlength}{\svgwidth}%
  \fi%
  \global\let\svgwidth\undefined%
  \global\let\svgscale\undefined%
  \makeatother%
  \begin{picture}(1,0.48463525)%
    \lineheight{1}%
    \setlength\tabcolsep{0pt}%
    \put(0,0){\includegraphics[width=\unitlength,page=1]{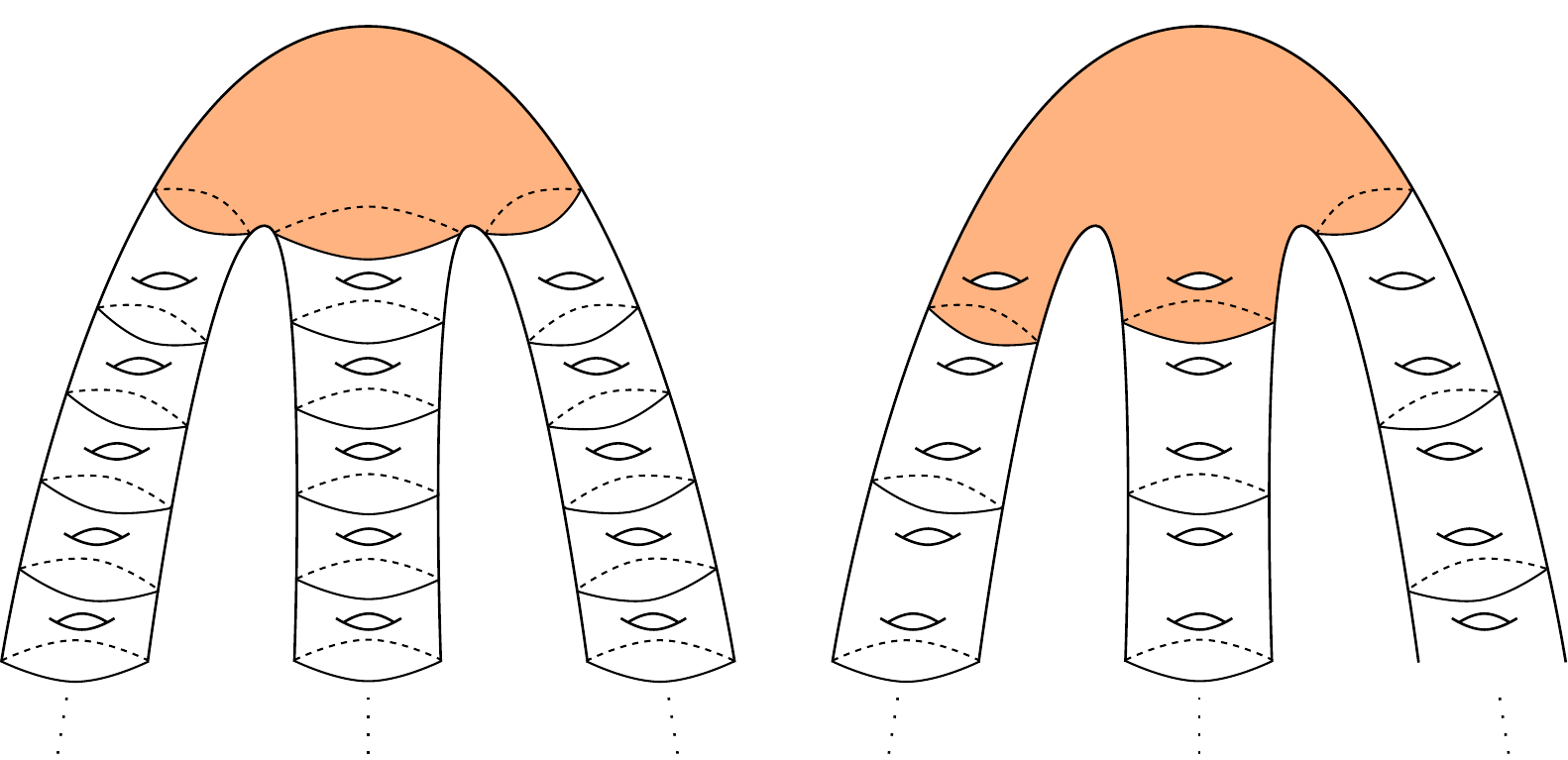}}%
    \put(0.22848835,0.38850051){\color[rgb]{0,0,0}\transparent{0.99230802}\makebox(0,0)[lt]{\lineheight{1.25}\smash{\begin{tabular}[t]{l}$C$\end{tabular}}}}%
    \put(0.76077595,0.38801086){\color[rgb]{0,0,0}\transparent{0.99230802}\makebox(0,0)[lt]{\lineheight{1.25}\smash{\begin{tabular}[t]{l}$C'$\end{tabular}}}}%
    \put(0.05285398,0.46030255){\color[rgb]{0,0,0}\transparent{0.99230802}\makebox(0,0)[lt]{\lineheight{0}\smash{\begin{tabular}[t]{l}$\mathcal{R}_{0,1,3}$\end{tabular}}}}%
    \put(0.57220434,0.46030255){\color[rgb]{0,0,0}\transparent{0.99230802}\makebox(0,0)[lt]{\lineheight{0}\smash{\begin{tabular}[t]{l}$\mathcal{R}_{2,2,3}$\end{tabular}}}}%
  \end{picture}%
\endgroup%

		    \caption{An example of a rigid structure $\cR_{2,2,3}$ (right) that is engulfed by the rigid structure $\cR_{0,1,3}$ (left). The two cores are shaded orange and the curves that are drawn are the boundaries of the pieces.}
	    \label{fig:engulfex}
\end{figure}

The terminology ``engulfing" is chosen to reflect the following behavior of the associated groups.
\begin{lemma} \label{lem:engulf}
    If $\cR_{g,h,r}$ is engulfed by $\cR_{0,1,r}$, then $B(g,h,r)$ is a finite index subgroup of $B(0,1,r)$.
\end{lemma}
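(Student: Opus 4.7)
The approach is to first verify the containment $B(g,h,r) \subseteq B(0,1,r)$ directly from the definitions, and then compute the index by comparing images under the structural homomorphism of \Cref{prop:compactly supported}.

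For the containment, take $f \in B(g,h,r)$ witnessed by a suited subsurface $Z$ for $\cR_{g,h,r}$. By the engulfing construction, the core $C'$ of $\cR_{g,h,r}$ is the union of the core $C$ of $\cR_{0,1,r}$ with $g$ pieces of $\cR_{0,1,r}$, and each piece $Y'_j$ of $\cR_{g,h,r}$ is a connected union of $h$ pieces of $\cR_{0,1,r}$. Thus both $Z$ and $f(Z)$ are suited subsurfaces for $\cR_{0,1,r}$. Moreover, each marking $\psi_j \colon Y'_j \to Y^h$ is assembled from the markings of its constituent sub-pieces to $Y^1$, compatibly with the fixed decomposition of $Y^h$ into $h$ copies of $Y^1$; therefore the rigidity identity $f|_{Y'_j} = \psi_i^{-1}\psi_j$ restricts on each sub-piece to the analogous rigidity condition for $\cR_{0,1,r}$. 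Hence $f \in B(0,1,r)$.

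For the index, I would apply \Cref{prop:compactly supported} to $B(0,1,r)$ to obtain a surjection $\varphi_0 \colon B(0,1,r) \to \mathbb{Z}^{r-1}\rtimes\Sym(r)$ with kernel $\Map_c(\Sigma_r)$, whose restriction to the pure part is $\Phi$. The aim is to identify $\varphi_0(B(g,h,r))$. Since $\Phi$ restricted to $PB(g,h,r)$ has image exactly $(h\mathbb{Z})^{r-1}$ (as stated just before \Cref{prop:compactly supported}), the pure part of $\varphi_0(B(g,h,r))$ is $(h\mathbb{Z})^{r-1}$; and the action of $B(g,h,r)$ on the $r$ ends is surjective onto $\Sym(r)$, since by \Cref{prop:compactly supported} applied to $B(g,h,r)$ itself this end-action factors through a surjection. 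Hence $\varphi_0(B(g,h,r)) = (h\mathbb{Z})^{r-1}\rtimes\Sym(r)$, which has index $h^{r-1}$ in $\mathbb{Z}^{r-1}\rtimes\Sym(r)$. Since $\Map_c(\Sigma_r) \subseteq B(g,h,r)$ and this equals $\ker(\varphi_0)$, the index $[B(0,1,r):B(g,h,r)]$ equals $h^{r-1}$, which is finite.

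The main obstacle is the containment step: carefully unwinding the definitions of the markings and swapping homeomorphisms in the engulfing construction to confirm that the rigidity condition in $\cR_{g,h,r}$ really implies the corresponding rigidity condition in $\cR_{0,1,r}$ on each sub-piece. The text already flags that care must be taken to preserve the swapping-homeomorphism compatibility; once that bookkeeping is set up, the index computation is a direct consequence of \Cref{prop:compactly supported}.
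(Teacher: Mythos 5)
Your proof is correct, and while the containment step coincides with the paper's (any $\cR_{g,h,r}$--suited subsurface is $\cR_{0,1,r}$--suited, and the markings to $Y^h$ are assembled from markings to $Y^1$), your index step takes a genuinely different route. The paper argues combinatorially: it labels the pieces of $\cR_{0,1,r}$ inside each piece of $\cR_{g,h,r}$ by $1,\dots,h$, observes that near each end every element of $PB(0,1,r)$ acts as a shift and hence cyclically permutes the labels mod $h$, and identifies $PB(g,h,r)$ as the kernel of the resulting action of $PB(0,1,r)$ on $r$ sets of $h$ labels; this is self-contained and is later reused in \Cref{lem:engulfing shift}. You instead deduce finiteness algebraically from \Cref{prop:compactly supported}, using $\ker(\varphi_0)=\Map_c(\Sigma_r)\subseteq PB(g,h,r)$, $\Phi(PB(g,h,r))=(h\ZZ)^{r-1}$, and surjectivity of the end action; this buys you the exact index $h^{r-1}$, which the paper never computes. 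One caveat: your intermediate claim $\varphi_0(B(g,h,r))=(h\ZZ)^{r-1}\rtimes\Sym(r)$ does not follow from knowing only the intersection with $\ZZ^{r-1}$ and the surjection onto $\Sym(r)$, since the image could be a twisted complement --- already in $\ZZ\rtimes\Sym(2)$ (infinite dihedral) there are two distinct index-two subgroups meeting $\ZZ$ in $2\ZZ$ and surjecting onto $\Sym(2)$. This is harmless for your conclusion: setting $H=\varphi_0(B(g,h,r))$ and $N=\ZZ^{r-1}$, an element of $H\cap N$ comes from an element of $B(g,h,r)$ acting trivially on the ends, hence from $PB(g,h,r)$, so $H\cap N=(h\ZZ)^{r-1}$; then $HN=G$ gives $[G:H]=[N:H\cap N]=h^{r-1}$ regardless of whether $H$ is the standard copy. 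With that one sentence repaired, your argument is complete and slightly sharper than the paper's, at the cost of leaning on \Cref{prop:compactly supported} rather than being self-contained.
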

\begin{remark}
    As a consequence of the lemma, it also follows that for arbitrary $(g,h)$--rigid and $(0,1)$-rigid structures, the associated group $B(g,h,r)$ is {\em conjugate} into the associated group $B(0,1,r)$ (in the lemma, there is no conjugation).
\end{remark}

\begin{proof} We first observe that any suited subsurface for $\cR_{g,h,r}$ is also a suited subsurface for $\cR_{0,1,r}$. Since the chosen homeomorphisms from pieces of $\cR_{g,h,r}$ to $Y^h$  are constructed from the homeomorphisms from pieces of $\cR_{0,1,r}$ to $Y^1$, it follows that $B(g,h,r) < B(0,1,r)$. To see that it has finite index, it suffices to show that $PB(g,h,r) < PB(0,1,r)$ has finite index.  For this, we can label each piece of $\cR_{0,1,r}$ inside a piece $Y_k'$ of $\cR_{g,h,r}$ with the numbers $1,\ldots,h$ ``in order from $\partial_-Y_k'$ to $\partial_+Y_k'$".  That is, we label the piece of $\cR_{0,1,r}$ in $Y_k'$ containing $\partial_- Y_k'$ with $1$, and recursively label the piece of $\cR_{0,1,r}$ in $Y_k'$ by $i$ if it nontrivially intersects the piece labeled $i-1$. See \Cref{fig:piecelabeling} for an example of this labeling in the neighborhood of an end.

\begin{figure}[ht!]
	    \centering
	    \def\svgwidth{\textwidth}
		    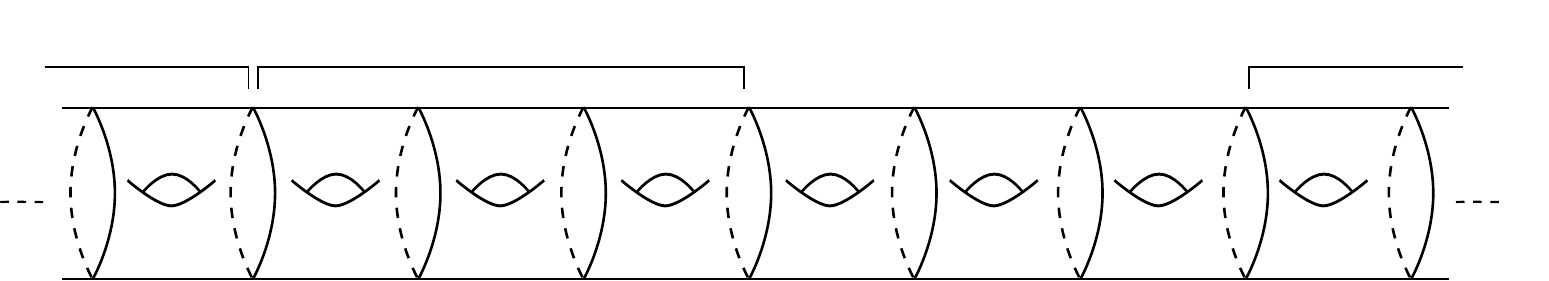
		    \caption{An example of the labeling in \Cref{lem:engulf} for $\cR_{0,1,r}$ engulfing $\cR_{g,3,r}$. Note that the shift map to the right will induce the cyclic permutation $1 \mapsto 2 \mapsto 3 \mapsto 1$.}
	    \label{fig:piecelabeling}
\end{figure}

The sequence of consecutive pieces of $\cR_{0,1,r}$ that exit any end have labels $1,\ldots,h$, which repeat periodically. In some neighborhood of each end any element of $PB(0,1,r)$ acts as a shift and therefore cyclically permutes the labels modulo $h$ in this neighborhood.  This defines an action of $PB(0,1,r)$ on the disjoint union of $r$ sets of $h$ elements. This action fixes each set of $h$ elements setwise and the kernel of this action is precisely $PB(g,h,r)$, which is thus a finite index subgroup of $PB(0,1,r)$, as required.
\end{proof}

\begin{lemma} \label{lem:engulfing shift}
    For any integers $g,n,m \geq 0$, $h \geq 1$, and $r \geq 2$, there are rigid structures $\cR_{g,h,r}$, $\cR_{g+n,h,r}$, and $\cR_{g+nr+mh,h,r}$ all engulfed by $\cR_{0,1,r}$ so that $PB(g,h,r) = PB(g+n,h,r)$ and $B(g,h,r) = B(g+nr+mh,h,r)$.
\end{lemma}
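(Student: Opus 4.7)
The plan is to realize all three rigid structures as engulfings of a single $(0,1)$-rigid structure $\cR_{0,1,r}$, parametrizing each engulfing by how many $\cR_{0,1,r}$-pieces are attached to the core along each end, and then to read off both equalities from the resulting ``shift'' data in each end.

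First I would note that an engulfing of a $(g,h)$-structure by $\cR_{0,1,r}$ is determined (up to the compatible marking conventions of Section 3) by a tuple $(a_1,\dots,a_r)$ of non-negative integers with $\sum_i a_i = g$, where $a_i$ records the number of $\cR_{0,1,r}$-pieces attached to $C$ along end $i$; the pieces of the resulting $(g,h)$-structure in end $i$ are then the consecutive $h$-blocks of $\cR_{0,1,r}$-pieces starting at position $a_i+1$. I would choose $(a_i)$ for $\cR_{g,h,r}$ arbitrarily, $(b_i)$ for $\cR_{g+n,h,r}$ arbitrarily (subject to $\sum b_i = g+n$), and
\[ (c_1, c_2,\dots, c_r) = (a_1 + n + mh,\ a_2 + n,\ \dots,\ a_r + n) \]
for $\cR_{g+nr+mh,h,r}$; note that $\sum c_i = g + nr + mh$ and that $c_i - a_i \equiv n \pmod h$ is independent of $i$.

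Next I would establish a \emph{shift characterization} of eventual rigidity. For $f \in \Map(\Sigma_r)$ inducing a permutation $\sigma \in \Sym(r)$ on the ends, the claim is that $f \in B(g,h,r)$ if and only if, far out in each end $i$, $f$ sends the $\cR_{0,1,r}$-piece of end $i$ in position $m$ to the $\cR_{0,1,r}$-piece of end $\sigma(i)$ in position $m + s_i$ via the induced markings, for some integer $s_i$ satisfying $s_i \equiv a_{\sigma(i)} - a_i \pmod h$. The ``only if'' direction uses that the $\cR_{g,h,r}$-marking of each piece is built from $\cR_{0,1,r}$-markings via a fixed decomposition of $Y^h$; the ``if'' direction uses the congruence to align the $h$-block boundaries in end $\sigma(i)$ with the $f$-images of those in end $i$. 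I expect this step to be the main obstacle, as it requires carefully unwinding the marking conventions.

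Granting the shift characterization, both equalities follow from a short mod-$h$ computation. For $\sigma = \id$ the congruence becomes $s_i \equiv 0 \pmod h$, which makes no reference to $(a_i)$; hence any two engulfings yield the same pure subgroup, and in particular $PB(g,h,r) = PB(g+n,h,r)$. For the full groups, our choice of $(c_i)$ gives
\[ (c_{\sigma(i)} - c_i) - (a_{\sigma(i)} - a_i) = (\delta_{\sigma(i),1} - \delta_{i,1})\, mh \equiv 0 \pmod h \]
for every $i$ and every $\sigma$, so the $(a_i)$- and $(c_i)$-congruences cut out the same set of eventually rigid homeomorphisms, yielding $B(g,h,r) = B(g+nr+mh,h,r)$.
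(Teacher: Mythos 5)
Your proposal is correct and takes essentially the same route as the paper: your tuple $(a_1,\dots,a_r)$ together with the shift characterization $s_i \equiv a_{\sigma(i)}-a_i \pmod h$ is exactly the paper's labeling of the $\cR_{0,1,r}$-pieces by $1,\dots,h$ in each end, combined with the observation that eventually rigid elements act as shifts near the ends and that $PB(g,h,r)$, resp.\ $B(g,h,r)$, is the kernel of the induced action on labels. Moreover, your choice $(c_i)=(a_1+n+mh,\,a_2+n,\dots,a_r+n)$ is literally the paper's construction (add one piece at each boundary component of the core, $n$ times, then $h$ pieces at a single boundary component, $m$ times), and your mod-$h$ computation is their statement that the kernels of the two label actions coincide.
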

\begin{proof}
    First construct any engulfing of $\cR_{g,h,r}$ as described above, so that $B(g,h,r) < B(0,1,r)$.  We construct the core of $\cR_{g+n,h,r}$ from that of $\cR_{g,h,r}$ by adding any $n$ pieces of $\cR_{0,1,r}$ to obtain a connected surface, and then the rest of the rigid structure is determined.  Note that the labeling of pieces of $\cR_{0,1,r}$ by $1,\ldots,h$ in each piece of $\cR_{g+n,h,r}$, as in the proof of \Cref{lem:engulf}, differs from the labeling in $\cR_{g,h,r}$ by a shift, modulo $h$, in each end.  In particular, the kernel of the action of $PB(0,1,r)$ on $\{1,\ldots,h\}$ for each end is the same for each of the two labelings.  Since the kernel of this action is $PB(g,h,r)$ and $PB(g+n,h,r)$ for the original and new labeling, respectively, we see that $PB(g,h,r) = PB(g+n,h,r)$.

    For the full group $B(g,h,r)$, we note that the labeling of the pieces of $\cR_{0,1,r}$ is invariant on any piece where an element of $B(g,h,r)$ is rigid.  We may therefore construct $\cR_{g+nr,h,r}$ as in the previous paragraph, except now taking care to add the same number of pieces of $\cR_{0,1,r}$, modulo $h$, towards each of the $r$ ends.  More precisely, we may add a single piece of $\cR_{0,1,r}$ to each boundary component of the core of $\cR_{g,h,r}$ which shifts the labels by exactly $1$, modulo $h$, and produces a $(g+r,h)$--rigid structure. Repeating this $n$ times results in $\cR_{g+nr,h,r}$.

    Next we can also add $h$ pieces to the core of $\cR_{g+nr,h,r}$ by adding all of them to a boundary component of the core. In this way the labels on pieces towards each end will not change at all. Repeating this $m$ times results in the required $\cR_{g+nr+mh,h,r}$. Note that, as in the pure case, $B(0,1,r)$ acts on each of these labels, defining two actions on the disjoint union of $r$ sets of $h$ elements. Unlike for the action $PB(0,1,r)$, this action may permute the disjoint sets of labels. However, we again have that the kernel of the two actions is the same, and these groups are precisely $B(g,h,r)$ and $B(g+nr+mh,h,r)$ for the old and new labeling, respectively, and so we see that $B(g,h,r) = B(g+nr+mh,h,r)$. 
\end{proof}

\section{Proof of Theorem~\ref{thm:main}}

In this section, we prove \Cref{thm:main}.  The proof is divided into subsections, one for each part of the theorem.

\subsection{Commensurability}

We start with the following.

\begin{proof}[Proof of \Cref{thm:main}(1).]
By \Cref{lem:engulf}, it suffices to prove that $B(0,1,r)$ is commensurable to $B(0,1,r')$ if and only if $r=r'$.  If $r=r'$, then these groups are equal.  We therefore assume $r \neq r'$, and prove that $B(0,1,r)$ and $B(0,1,r')$ are not commensurable. 

According to \Cref{prop:compactly supported} any finite quotient of $B(0,1,r)$ factors through $\ZZ^{r-1} \rtimes \Sym(r)$ and any finite quotient of $B(0,1,r')$ factors through $\ZZ^{r'-1} \rtimes \Sym(r')$. In particular, the suprema of ranks for abelianizations of finite index subgroups of $B(0,1,r)$ and $B(0,1,r')$ are $r-1$ and $r'-1$, respectively. Since $r \neq r'$,  the groups $B(0,1,r)$ and $B(0,1,r')$ are not commensurable. 
\end{proof}


\subsection{Pure subgroups}

We first check that any isomorphism between surface Houghton groups comes from conjugation.

\begin{proposition}\label{prop:commen}
    If $B(g,h,r) \cong B(g',h',r')$, then $r=r'$ and they are conjugate inside $\Map(\Sigma_r)$. The same also holds if $PB(g,h,r) \cong PB(g',h',r')$. 
\end{proposition}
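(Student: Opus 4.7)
The plan breaks into three steps. First, the equality $r=r'$ follows from \Cref{thm:main}(1) (proved just above), since any isomorphism is in particular a commensuration. Equivalently, Proposition~\ref{prop:compactly supported} identifies $r-1$ as the supremum of abelianization ranks among finite-index subgroups of $B(g,h,r)$, and the same invariant works for $PB(g,h,r)$: any finite-index $N\trianglelefteq PB(g,h,r)$ contains its normal core in $B(g,h,r)$, which has finite index in $B(g,h,r)$ and therefore contains $\Map_c(\Sigma_r)$ by Proposition~\ref{prop:compactly supported}; consequently every finite quotient of $PB(g,h,r)$ factors through $\tfrac{1}{h}\Phi\colon PB(g,h,r)\to \ZZ^{r-1}$.

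Second, residual finiteness of $\ZZ^{r-1}\rtimes\Sym(r)$ combined with Proposition~\ref{prop:compactly supported} yields
\[
\Map_c(\Sigma_r) \;=\; \bigcap \bigl\{N : N \trianglelefteq B(g,h,r),\; [B(g,h,r):N]<\infty\bigr\},
\]
and the analogous identity with $B(g,h,r)$ replaced by $PB(g,h,r)$. Hence $\Map_c(\Sigma_r)$ is a characteristic subgroup of both, so any isomorphism $\alpha\colon B(g,h,r)\to B(g',h',r')$ (respectively of pure subgroups) restricts to an isomorphism $\alpha_0\colon \Map_c(\Sigma_r) \to \Map_c(\Sigma_r)$. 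I would then adapt the rigidity argument of Bavard--Dowdall--Rafi \cite{BDR2020}, originally formulated for big mapping class groups, to produce an orientation-preserving homeomorphism $F\colon \Sigma_r\to\Sigma_r$ inducing $\alpha_0$ by conjugation. The required modification amounts to checking that their algebraic recognition of Dehn twists (via the structure of their centralizers), of disjointness of simple closed curves (via commutation of their twists), and the resulting combinatorial rigidity of the curve graph, are all implementable using only elements of $\Map_c(\Sigma_r)$.

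Third, after replacing $\alpha$ by its composition with conjugation by $F^{-1}$, we may assume $\alpha_0=\id$. Then for every $b \in B(g,h,r)$ and $f \in \Map_c(\Sigma_r)$,
\[
\alpha(b)\,f\,\alpha(b)^{-1} \;=\; \alpha(bfb^{-1}) \;=\; bfb^{-1},
\]
so $b^{-1}\alpha(b)$ centralizes $\Map_c(\Sigma_r)$ in $\Map(\Sigma_r)$. Since $\Map_c(\Sigma_r)$ contains Dehn twists along a filling system of simple closed curves, this centralizer is trivial, giving $\alpha(b)=b$; therefore the original $\alpha$ is conjugation by $F$, and the pure case is handled identically. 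The principal obstacle I anticipate is the second step: verifying that the reconstruction theorem of \cite{BDR2020} can be carried out entirely inside the compactly supported subgroup $\Map_c(\Sigma_r)$, rather than the ambient big mapping class group.
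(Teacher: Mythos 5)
Your proposal follows essentially the same route as the paper: deduce $r=r'$ from \Cref{thm:main}(1), use \Cref{prop:compactly supported} to characterize $\Map_c(\Sigma_r)$ algebraically as the intersection of all finite-index subgroups (hence preserved by any isomorphism), and then run the Bavard--Dowdall--Rafi rigidity argument of \cite{BDR2020} with their finite-support condition replaced by this algebraic one. The paper is merely more explicit about which parts of \cite{BDR2020}[Definition~4.7] must be modified (condition (1), plus restricting the centralizer elements in condition (3) to compactly supported ones), while your normal-core argument for the pure case and the final centralizer bootstrap are the same mechanism, so the approaches coincide.
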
 

This will follow from a minor modification to the proof of the analogous statement for full mapping class groups of infinite-type surfaces (and their pure subgroups) by Bavard-Dowdall-Rafi \cite{BDR2020}[Theorem 1.1].  Their theorem states, more precisely, that an isomorphism of finite index subgroups of mapping class groups (or pure mapping class groups) of infinite type surfaces is given by conjugation by a homeomorphism of the underlying surfaces. 



\begin{proof}[Proof of \Cref{prop:commen}]
    First we apply \Cref{thm:main}(1) to see that $r = r'$ so that both $B(g,h,r)$ and $B(g',h',r)$ can be realized as subgroups of $\Map(\Sigma_{r})$.

    The proof is exactly the same as that of \cite{BDR2020}[Theorem 1.1], with the following modifications.  First, a key ingredient is \cite{BDR2020}[Proposition~4.2], which gives an algebraic characterization of the elements of finite support (equivalently, compact support for $\Sigma_r$). In our case, this algebraic characterization is supplied by \Cref{prop:compactly supported}, which  implies that the intersection of all finite index subgroups is precisely the set of compactly supported elements.  From here, we recall that \cite{BDR2020}[Definition~4.7] lists some algebraic conditions that by \cite{BDR2020}[Corollary~4.8] characterize so called ``generating twists" (we note that, when the finite index subgroup is the entire mapping class group or pure subgroup, these are simply the positive and negative Dehn twists about simple closed curves).  We replace their condition (1) in \cite{BDR2020}[Definition~4.7] characterizing $f$ of finite support with the condition that $f$ is in the intersection of all finite index subgroups of $B(g,h,r)$.  Condition (2) and condition (4) of their definition remain the same, while we add to their condition (3) the requirement that we only consider elements of the centralizer that have compact support (determined algebraically by our modified condition (1)). With these changes, Bavard-Dowdall-Rafi's proof of \cite{BDR2020}[Theorem 1.1] now goes through without any changes to prove that an isomorphism $B(g,h,r) \cong B(g',h',r')$ is induced by conjugation in $\Map(\Sigma_r)$, where $r=r'$, and similarly if $PB(g,h,r) \cong PB(g',h',r')$.
\end{proof}


\begin{proof}[Proof of \Cref{thm:main}(2).]
    Here we consider $PB(g,h,r)$ and $PB(g',h',r')$, and observe that by \Cref{thm:main}(1), we may assume $r=r'$. Thus we will check that $PB(g,h,r)$ and $PB(g',h',r)$ are isomorphic if and only if $h=h'$. 

    We first suppose that $h \neq h'$. Let $\Phi:\PMap(\Sigma_{r}) \rightarrow \ZZ^{r-1}$ be the surjective homomorphism defined in \cite{APV2020}, which ``counts" the shifting of genus. Therefore, we see that $\Phi(PB(g,h,r)) = (h\ZZ)^{r-1}$ and $\Phi(PB(g',h',r)) = (h'\ZZ)^{r-1}$. Thus, since $h \neq h'$ we see that each of these groups has a different image under $\Phi$. In particular, they cannot be conjugate in $\Map(\Sigma_{r})$ and therefore cannot be isomorphic, by \Cref{prop:commen}. 

    Now suppose that $h = h'$. By \Cref{lem:engulfing shift}, we can choose rigid structures engulfed by a fixed structure $\cR_{0,1,r}$ so that $PB(g,h,r) = PB(g',h,r)$ inside the group $PB(0,1,r)$. 
\end{proof}

\subsection{Isomorphisms}

In this section, we prove the last part of \Cref{thm:main}.  The proof uses the structure of certain torsion elements, which requires an auxiliary construction we now describe.

Let $k>0$ be an even integer, $r \geq 2$ any integer, and consider any $g \geq 0$ of the form
    \[ g = 1 +\tfrac{k}2(r-1) + nr.\]
for some integer $n \geq -1$.
We construct an order $r$ homeomorphism of the compact surface $S_{g,r}$ of genus $g$ with $r$ boundary components having one orbit of boundary components and exactly $k$ fixed points.
For the construction, first consider the orbifold that is a disk with $k$ orbifold points of order $r$.  We denote this by $\mathcal O(k,r)$, and note its orbifold Euler characteristic is given by $\chi(\mathcal O(k,r)) = 1 - k(1-\frac{1}r)=1 - k + \tfrac{k}r$.
There is a homomorphism of the orbifold fundamental group
\[ \pi_1^{orb}(\mathcal O(k,r)) \cong \underbrace{\mathbb Z/r \mathbb Z \ast \cdots \ast \mathbb Z/r\mathbb Z}_k  \to \mathbb Z/r \mathbb Z\] sending loops around half the orbifold points to $1$ and the other half to $-1$.  The degree $r$, regular orbifold-cover corresponding to the kernel is a surface $S_{g_0,r}$ with Euler characteristic $\chi(S_{g_0,r}) = r+k-rk$.  This surface has $r$ boundary components, since the loop around the boundary of the disk is in the kernel of the homomorphism, and thus the genus $g_0$ of $S_{g_0,r}$ is $g_0 = 1+\tfrac{k}2(r-1)-r$.

The generator, $\rho_0 \colon S_{g_0,r} \to S_{g_0,r}$, of the covering group is a homeomorphism with the required properties for the case $n=-1$.  For all $n \geq 0$, we can glue on a genus $n+1$ surface with two boundary components to each of the $r$ boundary components to obtain the surface $S_{g,r}$, where $g = 1+\frac{k}2(r-1)+nk$, and extend $\rho_0$ to the required homeomorphism $\rho$ of $S_{g,r}$ in the obvious way.  This proves half of the following:

\begin{lemma} \label{lem:finite order}
Let $k > 0$ be even, $r \geq 2$, and $g \geq 0$ be integers.  Then there exists an order $r$, orientation preserving homeomorphism of $S_{g,r}$ with one orbit of boundary components, exactly $k$ fixed points, and no other periodic points (of period less than $r$) if and only if $g = 1 +\tfrac{k}2(r-1) + nr$ for some integer $n \geq -1$.
\end{lemma}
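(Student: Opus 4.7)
The existence direction has just been established, so the remaining task is the converse. The plan is to pass to the quotient orbifold $\mathcal{O} = S_{g,r}/\langle \rho \rangle$ and extract $g$ from an Euler characteristic computation that runs exactly parallel to the orbifold-cover construction used above for the case $n = -1$.

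The hypothesis on periodic points ensures that the stabilizer in $\langle \rho \rangle$ of any point of $S_{g,r}$ is either trivial or the whole group, the latter occurring precisely at the $k$ fixed points. Consequently the quotient map $p \colon S_{g,r} \to \mathcal{O}$ realizes $\mathcal{O}$ as an orbifold whose underlying space $Y$ is a compact orientable surface of some genus $h \geq 0$ carrying exactly $k$ cone points of order $r$; moreover $Y$ has exactly one boundary component, since the $r$ boundary components of $S_{g,r}$ form a single $\langle \rho \rangle$-orbit.

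By multiplicativity of (orbifold) Euler characteristic under the degree $r$ orbifold cover $p$,
\begin{equation*}
2 - 2g - r \;=\; \chi(S_{g,r}) \;=\; r \cdot \chi(\mathcal{O}) \;=\; r\bigl[(1 - 2h) - k\bigl(1 - \tfrac{1}{r}\bigr)\bigr].
\end{equation*}
A direct rearrangement gives
\begin{equation*}
g \;=\; 1 + \tfrac{k}{2}(r-1) + r(h-1),
\end{equation*}
and setting $n = h - 1$, with $n \geq -1$ since $h \geq 0$, produces the required expression.

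The step requiring the most care is confirming that $\mathcal{O}$ has the topological type just claimed. One verifies this with local models: near a boundary component $p$ restricts to an $r$-to-$1$ cyclic cover of a single boundary circle; near a fixed point the orientation-preserving hypothesis forces the local action to be conjugate to rotation by a primitive $r$-th root of unity, so the point contributes exactly one cone point of order $r$; and away from these loci $p$ is an honest covering space. Once these routine verifications are in place, the displayed Riemann-Hurwitz identity yields the stated constraint on $g$ with no further work.
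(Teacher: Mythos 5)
Your proposal is correct and follows essentially the same route as the paper's own proof: both pass to the quotient orbifold $\mathcal O = S_{g,r}/\langle \rho\rangle$, note it has one boundary component and $k$ cone points of order $r$, and recover $g = 1 + \tfrac{k}{2}(r-1) + r(g_0 - 1)$ from multiplicativity of the orbifold Euler characteristic, with $n = g_0 - 1 \geq -1$. Your verification of the local models (free action away from the $k$ fixed points, cyclic cover near the boundary, rotation near a fixed point) is a slightly more careful writeup of a step the paper leaves implicit, but the argument is the same.
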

\begin{proof}  We have already constructed the required homeomorphism under the given conditions on $g$. 

To prove the converse, suppose $\rho \colon S_{g,r} \to S_{g,r}$ is a self-homeomorphism with the specified properties, then the quotient $\mathcal O = S_{g,r}/\langle \rho \rangle$ is an orbifold with orbifold Euler characteristic $\chi(\mathcal O)= \tfrac1r (2-2g-r)$.  This orbifold has $1$ boundary component and $k$ orbifold points of order $r$. 
 Denoting its genus by $g_0$ we then have
\[\tfrac1r(2-2g-r) = 1-2g_0 - k(1-\tfrac1r),\]
and thus
\[ g = 1+\tfrac{k}2(1-r)+r(g_0-1).\]
Since $g_0 \geq 0$, this completes the proof.
\end{proof}

\begin{proof}[Proof of \Cref{thm:main}(3).]
First, suppose that $(g,h,r)$ and $(g',h,r)$ satisfy condition (3) from the statement of the theorem, and let $n,m$ be integers so that $g' = g + nh + mr$.  It follows from \Cref{lem:engulfing shift} that we may choose rigid structures engulfed by a fixed structure $\cR_{0,1,r}$ so that $B(g',h,r) = B(g,h,r)$ inside $B(0,1,r)$, proving one implication of part (3).

If $B(g,h,r) \cong B(g',h',r')$, then in particular they are commensurable so we have $r = r'$.  By \Cref{lem:engulfing shift}, we may view them both as subgroups of $B(0,1,r)$ by choosing rigid structures engulfed by some $\cR_{0,1,r}$.  By \Cref{prop:compactly supported}, it follows that $PB(g,h,r) \cong PB(g',h,r)$, since the pure subgroup of each is the unique normal subgroup with quotient ${\rm Sym}(r)$.  Therefore, $h=h'$ by part (2).  Furthermore, by \Cref{prop:commen}, we can assume that the isomorphism $B(g,h,r) \cong B(g',h,r)$ is induced by conjugation inside $\Map(\Sigma_r)$. 

Now let $W_0 \subset \Sigma_r$ be a suited subsurface for $\cR_{g,h,r}$ of genus $g_0$ with
\[ g_0 = 1+ \tfrac{k}2(r-1) + n r\]
for some even integer $k \geq 0$ and integer $n \geq -1$.  To do this, first note that the genera of suited subsurfaces are precisely the integers $g+mh$, as $m$ ranges over all positive integers.  Next, choose even $k \geq 0$ with $g \equiv 1 - \tfrac{k}2 \mbox{ mod } r$.
Thus there is an integer $n_0$ so that
\[ g = 1-\tfrac{k}2 + (n_0+\tfrac{k}2)r = 1 + \tfrac{k}2(r-1) + n_0r.\]
Then choose a large enough positive integer $m_0$ so that $n = n_0+m_0h \geq -1$, and thus
\[ g_0 = g + m_0hr = 1+\tfrac{k}2(r-1) + (n_0+m_0h)r\]
is the genus of a suited subsurface.  Let $m = m_0h$ here.

Now consider the element $\rho$ of order $r$ from \Cref{lem:finite order} applied to $W_0$.  Since $\rho$ is simply a cyclic permutation on the boundary components, we can extend $\rho$ to a finite order element $\hat \rho$ on all of $\Sigma_r$ with $\hat \rho \in B(g,h,r)$.  This homeomorphism has precisely the same fixed/periodic point data as $\rho$.  The image of $\hat \rho$ in $B(g',h,r)$ by the conjugation isomorphism is an element $\hat \rho'$ of order $r$ with the same fixed point/period data as well.  Since $\hat \rho'$ is rigid for $\cR_{g',h,r}$, there is a $\cR_{g',h,r}$--suited subsurface $W_1\subset \Sigma_r$ so that $\hat \rho'$ is rigid outside $W_1$.  Adding pieces to $W_1$ if necessary, we can assume $W_1$ is invariant by $\hat \rho$.  This suited subsurface has genus $g_1 = g' + m'h$ for some integer $m' \geq 0$ (since it is the union of the core and some number of pieces of $\cR_{g',h,r}$).  All fixed points of $\hat \rho$ are in $W_1$, and so it has exactly $k$ fixed points and no periodic points of order less than $r$. 
 Thus by \Cref{lem:finite order}, we have
 \[ g_1 = 1+ \tfrac{k}2(r-1) + n'r\]
for some $n' \geq -1$.  Thus
\[ g' +m'h = 1+\tfrac{k}2(r-1) +n'r = g_0-nr+n'r = g+mh +(n'-n)r. \]
Therefore $g' \equiv g + (m-m')h \mbox{ mod } r$, as required.
\end{proof}

\bibliography{bib}
\bibliographystyle{plain}

\end{document}